\newtheorem{theorem}{Theorem}[section]
\newtheorem{proposition}[theorem]{Proposition}
\newtheorem{lemma}[theorem]{Lemma}
\newtheorem{corollary}[theorem]{Corollary}
\newtheorem{conjecture}[theorem]{Conjecture}
\theoremstyle{definition}
\newtheorem{definition}[theorem]{Definition}
\theoremstyle{remark}
\newtheorem{remark}[theorem]{Remark}
\providecommand{\cE}{\mathcal E}
\providecommand{\E}{\mathbb E}
\providecommand{\R}{\mathbb R}
\providecommand{\Ent}{\mathrm{Ent}}
\providecommand{\relent}[2]{D\!\left(#1 \,\|\, #2\right)}
\begin{document}

\title{Transport-entropy inequalities and curvature in \\ discrete-space Markov chains}
\author{Ronen Eldan \and James R. Lee \and Joseph Lehec}

\date{}
\maketitle

\begin{abstract}
Let $G = ( \Omega , E )$ be a graph and let $d$ be the graph distance. 
Consider a discrete-time Markov chain $\{Z_t\}$ on $\Omega$ whose kernel $p$ 
satisfies $p(x,y)>0 \Rightarrow \{x,y\} \in E$ for every $x,y\in \Omega$. 
In words, transitions only occur between neighboring points of the graph. 
Suppose further that $(\Omega,p,d)$ has coarse Ricci curvature
at least $1/\alpha$ in the sense of Ollivier:
For all $x,y \in \Omega$, it holds that
$$
W_1(Z_1 \mid \{Z_0 = x\}, Z_1 \mid \{Z_0 = y\}) \leq \left (1-\frac{1}{\alpha}\right ) d(x,y),
$$
where $W_1$ denotes the Wasserstein $1$-distance.

In this note, we derive a transport-entropy inequality:
For any measure $\mu$ on $\Omega$, it holds that
\[
W_1(\mu,\pi) \leq \sqrt{\frac{2\alpha}{2-1/\alpha} \relent{\mu}{\pi}}\,,
\]
where $\pi$ denotes the stationary measure of $\{Z_t\}$
and $\relent{\cdot}{\cdot}$ is the relative entropy.

Peres and Tetali have conjectured a stronger consequence of coarse Ricci curvature,
that a modified log-Sobolev inequality (MLSI) should hold, in analogy with the
setting of Markov diffusions.  We discuss how our approach
suggests a natural attack on the MLSI conjecture.
\end{abstract}


\section{Introduction}

In geometric analysis on manifolds, it is by now well-established that the Ricci curvature of the underlying manifold has profound consequences for functional inequalities and the
rate of convergence of Markov semigroups toward equilibrium.
One can consult, in particular the books \cite{BGL14} and \cite{Villani09}.
Indeed, in the setting of {\em diffusions} (see \cite[\S 1.11]{BGL14}),
there is an elegant theory around the Bakry-Emery {\em curvature-dimension} condition.

Roughly speaking, in the setting of diffusion on a continuous space, when there is
an appropriate ``integration by parts'' formula (that connects the Dirichlet form to the Laplacian),
a positive curvature condition implies powerful functional inequalities.
Most pertinent to the present discussion, positive curvature yields transport-entropy
and logarithmic Sobolev inequalities.

For discrete state spaces, the situation appears substantially more challenging.
There are numerous attempts at generalizing lower bounds on the Ricci curvature
to discrete metric measure spaces; we refer to the upcoming survey \cite{CGMPRSST16}.
At a broad level, these approaches suffer from one of two drawbacks:  Either the
notion of ``positive curvature'' is difficult to verify for concrete spaces,
or the ``expected'' functional analytic consequences do not follow readily.

In the present note, we consider the notion of {\em coarse Ricci curvature} due to Ollivier
\cite{Ollivier09}.  It constitutes an approach of the latter type:  There is a large body of
finite-state Markov chains that have positive curvature in Ollivier's sense, but for many of them
we do not yet know if strong functional-analytic consequences hold.
This study is made more fascinating by the straightforward connection between
coarse Ricci curvature on graphs and the notion of {\em path coupling} arising
in the study of rapid mixing of Markov chains \cite{BD97}.
This is a powerful method to establish fast convergence to the stationary measure; see, for example,
\cite[Ch. 14]{LPW09}.

In particular, if there were an analogy to the diffusion setting that allowed
coarse Ricci curvature lower bounds to yield logarithmic Sobolev inequalities
(or variants thereof), it would even imply new mixing time bounds for well-studied
chains arising from statistical physics, combinatorics, and theoretical computer science.
A conjecture of Peres and Tetali asserts that a {\em modified log-Sobolev inequality} (MLSI) should always hold in this setting.  Roughly speaking, this means that the underlying Markov
chain has exponential convergence to equilibrium in the relative entropy distance.

Our aim is to give some preliminary results
in this direction and to suggest a new approach to establishing MLSI.
In particular, we prove a $W_1$ transport-entropy inequality.
By results of Bobkov and G\"otze~\cite{BG99}, this is equivalent
to a sub-Gaussian concentration estimate for Lipschitz functions.
Sammer has shown that such an inequality follows formally from MLSI
\cite{Sammer05}, thus one can see verification as evidence
in support of the Peres-Tetali conjecture.
Our result also addresses {\sc Problem J} in Ollivier's survey \cite{Ollivier10}.

\subsection{Coarse Ricci curvature and transport-entropy inequalities}

Let $\Omega$ be a countable state space, and let $p: \Omega \times \Omega \to [0,1]$ denote
a transition kernel.
For $x \in \Omega$, we will use the notation $p(x,\cdot)$ to denote the function
$y \mapsto p(x,y)$.
For a probability measure $\pi$ on $\Omega$
and $f : \Omega \to \mathbb{R}_+$,
we define the entropy of $f$ by
\[
\Ent_{\pi}(f) = \E_{\pi} \left[f \log \left( \frac f {\E_\pi [f] } \right) \right] \, .
\]

We also equip $\Omega$ with a metric $d$. 
If $\mu$ and $\nu$ are two probability
measures on $\Omega$, we denote
by $W_1 ( \mu , \nu )$ the transportation cost (or Wasserstein $1$-distance) between
$\mu$ and $\nu$, with the cost function given by the distance $d$.
Namely,
\[
W_1 ( \mu , \nu )  = \inf \left\{ \E \left[ d(X,Y) \right] \right\}
\]
where the infimum is taken on all couplings $(X,Y)$ of $(\mu,\nu)$.
Recall the Monge--Kantorovitch duality formula for $W_1$
(see, for instance,~\cite[Case 5.16]{Villani09}):
\begin{equation}\label{eq:MK}
W_1 ( \mu , \nu )
= \sup  \left\{ \int_\Omega f \, d\mu - \int_\Omega f \, d\nu \right\} ,
\end{equation}
where the supremum is taken over $1$--Lipschitz functions $f$.
We consider the following notion of curvature
introduced by Ollivier~\cite{Ollivier09}.
\begin{definition}
	The \emph{coarse Ricci curvature} of $(\Omega, p, d)$ is the largest $\kappa \in [-\infty , 1 ]$
	such that the inequality
	\[
	W_1 ( p(x, \cdot) , p(y, \cdot) ) \leq (1-\kappa) \, d(x,y)
	\]
	holds true for every $x,y \in \Omega$.
\end{definition}
In the sequel we will be interested in positive
Ricci curvature. Under this condition the map $\mu \mapsto \mu p$
is a contraction for $W_1$. As a result, it has a unique fixed point
and $\mu p^n$ converges to this fixed point as $n \to \infty$.
In other words the Markov kernel $p$ has a unique stationary measure
and is ergodic.
The main purpose of this note is to show that positive curvature yields a transport-entropy inequality, or equivalently a Gaussian concentration inequality for the stationary measure.
\begin{definition}
	We say that a probability measure $\mu$
	on $\Omega$ satisfies the \emph{Gaussian concentration
		property} with constant $C$ if the inequality
	\[
	\int_\Omega \exp( f ) \, d \mu
	\leq \exp \left( \int_\Omega f \, d\mu + C \, \Vert f \Vert_{\text{Lip}}^2 \right)
	\]
	holds true for every Lipschitz function $f$.
\end{definition}

Now we spell out the dual formulation of the Gaussian
concentration property in terms of transport
inequality. Recall first the definition of the relative entropy
(or Kullback divergence): for two measures $\mu,\nu$ on $(\Omega,\mathcal B)$,
\[
\relent{\nu}{\mu} =
\Ent_{\mu}[\tfrac{d \nu}{d \mu} ] = \int_\Omega \log \left( \frac{ d\nu }{ d\mu} \right) \, d\nu
\]
if $\nu$ is absolutely continuous with respect to $\mu$
and $\relent{\nu}{\mu} = +\infty$ otherwise.
As usual, if $X$ and $Y$ are random variables with laws $\nu$ and $\mu$,
we will take $\relent{X}{Y}$ to be synonymous with $\relent{\nu}{\mu}$.
\begin{definition}
	We say that $\mu$ satisfies $(T_1)$
	with constant $C$ if for every
	probability measure $\nu$ on $\Omega$ we have
	\[
	W_1 ( \mu , \nu )^2 \leq C \cdot \relent{\nu}{\mu} . \qquad\quad \mbox{($T_1$)}
	\]
\end{definition}
As observed by Bobkov and G\"otze~\cite{BG99}, the inequality~$(T_1)$
and the Gaussian concentration property are equivalent.
\begin{lemma}\label{lem:duality}
	A probability measure $\mu$ satisfies the Gaussian concentration property
	with constant $C$ if and only if it satisfies $(T_1)$
	with constant $4C$.
\end{lemma}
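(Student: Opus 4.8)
The plan is to prove both implications by dualizing through the Gibbs variational principle (the Donsker--Varadhan formula), which complements the Monge--Kantorovich duality \eqref{eq:MK} already at hand. Recall that for any bounded measurable $g$,
\[
\log \int_\Omega e^{g}\, d\mu = \sup_\nu \left\{ \int_\Omega g \, d\nu - \relent{\nu}{\mu} \right\},
\]
the supremum running over probability measures $\nu$ on $\Omega$. The two properties $(T_1)$ and Gaussian concentration are precisely the two sides of this identity once the linear functional $\int g \, d\nu - \int g\, d\mu$ is controlled via \eqref{eq:MK}; the factor $4$ will emerge from a single quadratic optimization.

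First I would show that Gaussian concentration with constant $C$ implies $(T_1)$ with constant $4C$. Fix a $1$-Lipschitz function $f$ and a parameter $t > 0$, and apply the variational lower bound to $g = -t f$, giving
\[
-t\int_\Omega f\, d\nu - \relent{\nu}{\mu} \le \log \int_\Omega e^{-tf}\, d\mu.
\]
Since $-tf$ has Lipschitz constant $t$, Gaussian concentration bounds the right-hand side by $-t\int f\, d\mu + Ct^2$. Rearranging yields
\[
t\left( \int_\Omega f\, d\mu - \int_\Omega f\, d\nu \right) \le \relent{\nu}{\mu} + C t^2,
\]
and maximizing the left-hand linear-minus-quadratic expression over $t > 0$ (i.e.\ taking $t = a/(2C)$, where $a$ denotes the bracketed difference) produces $a^2 \le 4C\, \relent{\nu}{\mu}$. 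Taking the supremum over $1$-Lipschitz $f$, for which the optimal difference $a$ is nonnegative, and invoking \eqref{eq:MK} gives $W_1(\mu,\nu)^2 \le 4C\, \relent{\nu}{\mu}$, which is $(T_1)$.

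For the converse, assume $(T_1)$ with constant $4C$ and fix a Lipschitz $f$ with $L = \Vert f \Vert_{\text{Lip}}$. For any probability measure $\nu$, the function $f/L$ is $1$-Lipschitz, so \eqref{eq:MK} gives $\int f\, d\nu - \int f\, d\mu \le L\, W_1(\mu,\nu)$, and then $(T_1)$ bounds $W_1(\mu,\nu) \le 2\sqrt{C\,\relent{\nu}{\mu}}$. Writing $r = \relent{\nu}{\mu}$, I obtain
\[
\int_\Omega f\, d\nu - \relent{\nu}{\mu} \le \int_\Omega f\, d\mu + 2L\sqrt{C}\,\sqrt{r} - r.
\]
The map $r \mapsto 2L\sqrt{C}\,\sqrt r - r$ is maximized at $r = L^2 C$ with value $L^2 C$, so the right-hand side is at most $\int f\, d\mu + C L^2$. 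Taking the supremum over $\nu$ and using the Gibbs formula identifies the left-hand supremum with $\log \int e^f\, d\mu$, yielding the Gaussian concentration bound with constant $C$.

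I do not anticipate a genuine obstacle here: the argument is a clean double application of the two dualities glued by an elementary optimization, and the only real bookkeeping is to keep the sign of $t$ consistent so that the Lipschitz test function feeding \eqref{eq:MK} points in the correct direction. The one point requiring mild care is the validity of the Gibbs variational formula on the (possibly infinite) countable space $\Omega$; restricting first to bounded Lipschitz $f$ and then approximating, or simply noting that both sides are finite under the stated hypotheses, handles any integrability concern.
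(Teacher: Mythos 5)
Your proof is correct and is essentially the same argument the paper relies on: the paper gives no details beyond noting the lemma follows from the Monge--Kantorovitch duality \eqref{eq:MK} and citing Bobkov and G\"otze \cite{BG99}, and your combination of the Gibbs (Donsker--Varadhan) variational principle with \eqref{eq:MK} plus the quadratic optimizations in $t$ and $r$ is precisely that standard argument. Both directions, including the factor-$4$ bookkeeping and the sign/nonnegativity point when passing to the supremum over $1$-Lipschitz functions, check out.
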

This is a relatively straightforward consequence of
the Monge--Kantorovitch duality~\eqref{eq:MK};
we refer to~\cite{BG99} for details.
\begin{theorem}\label{thm:main}
	Assume that $(\Omega, p , d)$ has positive coarse Ricci curvature $1/\alpha$
	and that the one--step transitions all satisfy $(T_1)$
	with the same constant $C$:  Suppose that for every $x\in \Omega$ and for every probability measure $\nu$ we have
	\begin{equation}\label{eq:onestep}
	W_1 ( \nu , p(x,\cdot) )^2  \leq C \cdot \relent{\nu}{p(x,\cdot)}\,.
	\end{equation}
	Then the stationary measure $\pi$ satisfies $T_1$
	with constant $\frac{C \alpha} { 2-1/\alpha}$.
\end{theorem}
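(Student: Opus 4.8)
The plan is to prove the \emph{dual} Gaussian concentration property for $\pi$ rather than attacking $(T_1)$ directly, and to convert at the very end via Lemma~\ref{lem:duality}; this routes the whole argument through a clean geometric iteration and produces the stated constant exactly. Write $\kappa = 1/\alpha$ and let $P$ denote the Markov operator $Pf(x) = \sum_{y} p(x,y) f(y)$. Two ingredients are needed. First, $P$ contracts Lipschitz seminorms: for any $L$--Lipschitz $f$, the Monge--Kantorovich duality~\eqref{eq:MK} together with the curvature hypothesis gives $|Pf(x)-Pf(y)| \le L\, W_1(p(x,\cdot),p(y,\cdot)) \le (1-\kappa) L\, d(x,y)$, so that $P^n f$ is $(1-\kappa)^n L$--Lipschitz. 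Second, the one--step hypothesis~\eqref{eq:onestep} is recast in dual form: by Lemma~\ref{lem:duality}, each $p(x,\cdot)$ satisfying $(T_1)$ with constant $C$ is equivalent to each $p(x,\cdot)$ satisfying Gaussian concentration with constant $C/4$, i.e. $\int e^{f}\,dp(x,\cdot) \le \exp\!\big(Pf(x) + \tfrac{C}{4}\|f\|_{\mathrm{Lip}}^2\big)$ for every Lipschitz $f$.

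The heart of the argument is a single iteration step exploiting stationarity $\pi p = \pi$. Fixing a Lipschitz $f$ and applying the one--step Gaussian concentration pointwise inside the integral,
\[
\int e^{f}\,d\pi = \int\Big(\int e^{f}\,dp(x,\cdot)\Big)\,d\pi(x) \le e^{\frac{C}{4}\|f\|_{\mathrm{Lip}}^2}\int e^{Pf}\,d\pi .
\]
Iterating this $N$ times, with the Lipschitz constant of the exponent shrinking at each stage, yields
\[
\int e^{f}\,d\pi \le \exp\!\Big(\tfrac{C}{4}\sum_{n=0}^{N-1}\|P^n f\|_{\mathrm{Lip}}^2\Big)\int e^{P^N f}\,d\pi .
\]
Since $\|P^n f\|_{\mathrm{Lip}} \le (1-\kappa)^n\|f\|_{\mathrm{Lip}}$, the exponent is summable, and the geometric series evaluates to $\sum_{n\ge 0}(1-\kappa)^{2n} = \frac{1}{1-(1-\kappa)^2} = \frac{1}{\kappa(2-\kappa)}$; this is precisely where the claimed constant originates.

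To close the argument I would let $N\to\infty$. Positive curvature makes the chain ergodic, so $P^N f$ converges to the constant $\int f\,d\pi$ (its oscillation is controlled by $\|P^N f\|_{\mathrm{Lip}}\to 0$), whence $\int e^{P^N f}\,d\pi \to \exp(\int f\,d\pi)$. This shows $\pi$ satisfies Gaussian concentration with constant $\frac{C}{4\kappa(2-\kappa)}$, and a final application of Lemma~\ref{lem:duality} upgrades this to $(T_1)$ with constant $\frac{C}{\kappa(2-\kappa)} = \frac{C\alpha}{2-1/\alpha}$, as desired. The one genuinely delicate point I anticipate is justifying the limit $N\to\infty$ when $\Omega$ is infinite and $f$ is unbounded: one must ensure the integrals $\int e^{P^N f}\,d\pi$ are finite and converge, which calls for a mild truncation or integrability argument (or simply restricting to bounded Lipschitz $f$, which already suffices to recover the dual $W_1$ bound). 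By contrast, the more naive route of telescoping $W_1(\mu,\pi)\le\sum_n W_1(\mu p^n,\mu p^{n+1})$ and bounding each increment by the per-step entropy production is lossy and does not reach the sharp constant, which is why passing through Gaussian concentration is essential.
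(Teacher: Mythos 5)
Your proposal is correct and is essentially the paper's first proof (the duality argument): both convert the one-step $(T_1)$ hypothesis into pointwise Gaussian concentration via Lemma~\ref{lem:duality}, turn the curvature hypothesis into the Lipschitz contraction $\Vert P^n f \Vert_{\mathrm{Lip}} \leq (1-1/\alpha)^n \Vert f \Vert_{\mathrm{Lip}}$ using Monge--Kantorovitch duality~\eqref{eq:MK}, sum the same geometric series $\sum_n (1-1/\alpha)^{2n} = \frac{\alpha}{2-1/\alpha}$, and dualize back at the end. The only difference is bookkeeping: the paper iterates the pointwise bound $P_n[\mathrm{e}^f](x) \leq \exp\left(P_n[f](x) + \tfrac{C}{4}\sum_{k<n}\Vert P_k f\Vert_{\mathrm{Lip}}^2\right)$ and then lets $\delta_{x_0}p^n \to \pi$, whereas you integrate against $\pi$ from the outset and invoke stationarity at each step; your variant (with the restriction to bounded Lipschitz $f$, which indeed suffices for $(T_1)$) handles the limit $N \to \infty$ by bounded convergence, which is if anything slightly cleaner than the paper's unjustified passage to the limit.
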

\begin{remark}
Observe that Theorem~\ref{thm:main} does not assume reversibility.
\end{remark}
The hypothesis~\eqref{eq:onestep}
might seem unnatural at first
sight but it is automatically satisfied
for the random walk on a graph when $d$ is the graph distance.
Indeed, recall Pinsker's inequality: For every probability
measures $\mu,\nu$ we have
\[
\mathrm{TV}(\mu,\nu) 
\leq \sqrt{  \frac 12 \, \relent{\mu}{\nu} } ,
\]
where $\mathrm{TV}$ denotes the total variation distance. 
This yields the following lemma. 
\begin{lemma}
	Let $\mu$ be a probability measure on a metric
	space $(M,d)$ and assume that the support of $\mu$
	has finite diameter $\Delta$.
	Then $\mu$ satisfies $(T_1)$ with constant $\Delta^2/2$.
\end{lemma}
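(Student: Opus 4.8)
The plan is to reduce the Wasserstein distance to the total variation distance using the diameter hypothesis, and then to apply Pinsker's inequality.

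I would first dispose of the trivial case. If $\relent{\nu}{\mu} = +\infty$, then $(T_1)$ holds vacuously, so we may assume $\nu \ll \mu$. In that case the support of $\nu$ is contained in the support of $\mu$, so both measures are supported on a common set of diameter at most $\Delta$. This reduction is the whole reason the diameter bound may be applied to couplings below: the second measure $\nu$ is a priori arbitrary, and only absolute continuity confines it to the bounded set.

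The key comparison is then the following. For any coupling $(X,Y)$ of $(\mu,\nu)$, both $X$ and $Y$ lie in this set of diameter $\Delta$, so pointwise $d(X,Y) \leq \Delta \cdot \mathbf 1[X \neq Y]$, and hence $\E[d(X,Y)] \leq \Delta \, \Pr[X \neq Y]$. Recalling the coupling characterization $\mathrm{TV}(\mu,\nu) = \inf \Pr[X \neq Y]$, the infimum being over couplings and attained by the maximal coupling, I would evaluate the right-hand side at a coupling realizing the total variation distance to obtain
\[
W_1(\mu,\nu) \leq \Delta \cdot \mathrm{TV}(\mu,\nu).
\]
Finally, since the total variation distance is symmetric, Pinsker's inequality gives $\mathrm{TV}(\mu,\nu) = \mathrm{TV}(\nu,\mu) \leq \sqrt{\tfrac{1}{2}\relent{\nu}{\mu}}$. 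Squaring the last display and combining yields
\[
W_1(\mu,\nu)^2 \leq \Delta^2 \cdot \mathrm{TV}(\mu,\nu)^2 \leq \frac{\Delta^2}{2} \, \relent{\nu}{\mu},
\]
which is precisely $(T_1)$ with constant $\Delta^2/2$.

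There is no serious obstacle here: this is a two-ingredient argument. The only point demanding care is guaranteeing that both measures live in a bounded set before the diameter is invoked, which is exactly why the reduction to the absolutely continuous case comes first; once that is in place, the bound $W_1 \leq \Delta \cdot \mathrm{TV}$ and Pinsker's inequality are both standard, and the symmetry of $\mathrm{TV}$ is what lets us land on the relative entropy in the order $\relent{\nu}{\mu}$ required by $(T_1)$.
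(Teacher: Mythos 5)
Your proof is correct and follows essentially the same route as the paper: reduce to the absolutely continuous case, bound $W_1(\mu,\nu) \leq \Delta \cdot \mathrm{TV}(\mu,\nu)$ using that both measures live on a set of diameter $\Delta$, and conclude with Pinsker's inequality. The extra details you supply (the coupling characterization of total variation and the vacuous infinite-entropy case) are points the paper leaves implicit, but the argument is identical in substance.
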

\begin{proof}
	Let $\nu$ be absolutely continuous with respect to
	$\mu$. Then both $\mu$ and $\nu$ are supported
	on a set of diameter $\Delta$. This implies
	that
	\[
	W_1 ( \mu , \nu ) \leq \Delta\cdot \mathrm{TV} ( \mu , \nu ) .
	\]
	Combining this with Pinsker's inequality we get 
        $W_1 ( \mu , \nu )^2 \leq \frac {\Delta^2} 2 \relent{\nu}{\mu}$,
	which is the desired result.
\end{proof}
\medskip
\noindent
{\bf Random walks on graphs.}
A particular case of special interest will be
random walks on finite graphs.  Let $G=(V,E)$ be
a connected, undirected graph, possibly with self-loops.
Given non-negative conductances $c : E \to \R_+$ on the edges,
we recall the Markov chain $\{X_t\}$ defined by
\[
\Pr[X_{t+1} = y \mid X_t = x] = \frac{c(\{x,y\})}{\sum_{z \in V} c(\{x,z\})}\,.
\]
We refer to any such chain as {\em a random walk on the graph $G$.}
If it holds that $c(\{x,x\}) \geq \frac12 \sum_{Z \in V} c(\{x,z\})$ for all $x \in V$,
we say that the corresponding random walk is {\em lazy.}
We will equip $G$ with its graph distance $d$.

In this setting,
the transitions of the walk are supported on a set of diameter $2$.
So combining the preceding lemma with Theorem~\ref{thm:main}, one arrives at the following.
\begin{corollary}\label{cor:main}
	If a random walk on a graph has
	positive coarse Ricci curvature $\tfrac{1}{\alpha}$
(with respect to the graph distance),
then the stationary
	measure $\pi$ satisfies
	\[
	W_1 ( \mu , \pi )^2 \leq \frac {2 \alpha}{2-1/\alpha} \, \relent{\mu}{\pi},
	\]
	for every probability measure $\mu$.
\end{corollary}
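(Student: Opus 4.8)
The plan is to deduce Corollary~\ref{cor:main} directly from Theorem~\ref{thm:main}, the only work being to verify that the hypothesis~\eqref{eq:onestep} holds with $C = 2$ for every random walk on a graph. The crucial geometric fact is that each one-step distribution $p(x,\cdot)$ is carried by the closed ball of radius $1$ about $x$ --- that is, by $x$ together with its graph-neighbors --- and any such ball has diameter at most $2$.

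First I would make this support bound precise. If $y$ and $z$ both lie in the support of $p(x,\cdot)$, then $p(x,y) > 0$ and $p(x,z) > 0$, so each of $y,z$ is at graph distance at most $1$ from $x$; the triangle inequality then gives $d(y,z) \le 2$. Hence $p(x,\cdot)$ is supported on a set of diameter $\Delta \le 2$. Applying the preceding diameter lemma to the measure $\mu = p(x,\cdot)$ shows that $p(x,\cdot)$ satisfies $(T_1)$ with constant $\Delta^2/2 \le 2$; since $W_1$ is symmetric, this is exactly the assertion that $W_1(\nu, p(x,\cdot))^2 \le 2\,\relent{\nu}{p(x,\cdot)}$ for every $\nu$, i.e.\ \eqref{eq:onestep} with $C = 2$.

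With the hypothesis in hand, I would simply invoke Theorem~\ref{thm:main} with curvature $1/\alpha$ and one-step constant $C = 2$. Its conclusion is that the stationary measure $\pi$ satisfies $(T_1)$ with constant $\frac{C\alpha}{2 - 1/\alpha} = \frac{2\alpha}{2 - 1/\alpha}$, which is precisely the claimed inequality $W_1(\mu,\pi)^2 \le \frac{2\alpha}{2-1/\alpha}\,\relent{\mu}{\pi}$ for all $\mu$.

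Granting Theorem~\ref{thm:main} and the diameter lemma, there is no genuine obstacle here; the argument is a direct specialization. The only points requiring a moment's care are that the relative-entropy term forces any competitor $\nu$ in~\eqref{eq:onestep} to be absolutely continuous with respect to $p(x,\cdot)$, so that $\nu$ is automatically carried by the same diameter-$2$ set and the diameter lemma applies, and that allowing self-loops merely places $x$ itself in the support, which does not affect the bound $\Delta \le 2$. The real content of the result lives upstream, in the proof of Theorem~\ref{thm:main}.
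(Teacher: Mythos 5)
Your proof is correct and follows exactly the paper's route: the paper likewise observes that each one-step distribution $p(x,\cdot)$ is supported on a set of diameter at most $2$ (so the Pinsker-based diameter lemma gives the one-step $(T_1)$ with constant $2$), and then invokes Theorem~\ref{thm:main} with $C=2$ to obtain the constant $\frac{2\alpha}{2-1/\alpha}$.
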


\begin{remark} 
One should note that in this context we have 
\[
d(x,y) \leq W_1  \left( p(x,\cdot ) , p(y,\cdot ) \right) + 2 , \quad \forall x,y \in \Omega  ,
\]
just because after one step the walk is at distance 
$1$ at most from its starting point. As a result, 
having coarse Ricci curvature $1/\alpha$ implies that the diameter 
$\Delta$ of the graph is at most $2\alpha$. So by the previous lemma, 
\emph{every} measure on the graph satisfies $T_1$ with constant $2 \alpha^2$. 
The point of Corollary~\ref{cor:main} is that for the stationary measure 
$\pi$ the constant is order $\alpha$ rather than $\alpha^2$. 
\end{remark}
 
\medskip

We now present two proofs of Theorem~\ref{thm:main}.  The first proof is rather short
and based on the duality formula \eqref{eq:MK}.
The second argument provides an explicit coupling
based on an entropy-minimal drift process.
In Section~\ref{sec:mlsi}, we discuss
logarithmic Sobolev inequalities.
In particular, we present a conjecture about the structure
of the entropy-minimal drift that is equivalent to
the Peres-Tetali MLSI conjecture.

\medskip

After the first version of this note was released 
we were notified that Theorem~\ref{thm:main} was proved 
by Djellout, Guillin and Wu in~\cite[Proposition 2.10]{DGW04}. 
Note that this article actually precedes Ollivier's work. 
The proof given there corresponds to our first proof, by duality. 
Our second proof is more original but does share some
similarities with the argument given by K.~Marton in~\cite[Proposition 1]{Marton96}.
Also, after hearing about our work, Fathi and Shu \cite{FS15} used
their transport-information framework to provide yet another proof.

\section{The $W_1$ transport-entropy inequality}

We now present two proofs of Theorem~\ref{thm:main}.
Recall the relevant data $(\Omega,p,d)$.
Define the process $\{B_t\}$ to be the discrete-time random walk on $\Omega$ corresponding
to the transition kernel $p$.
For $x \in \Omega$, we will use $B_t(x)$ to denote the random variable $B_t \mid \{B_0=x\}$.
For $t \geq 0$, we make the definition
\[
P_t[f](x) = \E[f(B_t(x))]\,.
\]

\subsection{Proof by duality}

	Let $f : \Omega \to \R$ be a Lipschitz function.
	Using the hypothesis~\eqref{eq:onestep} and
	Lemma~\ref{lem:duality}
	we get
	\[
	P_1 [ \exp(f) ] (x) \leq \exp\left( P_1[f] (x) + \frac C 4 \Vert f \Vert_{\rm Lip}^2 \right) ,
	\]
	for all $x \in \Omega$. Applying this inequality repeatedly we obtain
	\begin{equation}\label{eq:step}
	P_n [ \exp(f) ](x)  \leq
	\exp \left( P_n [f](x) + \frac C 4 \sum_{k=0}^{n-1} \Vert P_k f \Vert_{\rm Lip}^2 \right) ,
	\end{equation}
	for every integer $n$ and all $x \in \Omega$. Now we use the curvature hypothesis.
	Note that the Monge--Kantorovitch duality~\eqref{eq:MK}
	yields easily
	\begin{align*}
	1-\tfrac{1}{\alpha}
	~& = \sup_{x\neq y} \left\{ \frac { W_1 ( p(x, \cdot) , p(y, \cdot) ) }{ d(x,y) } \right\} \\
	& = \sup_{x\neq y, g} \left\{ \frac { P_1[g](x) - P_1[g](y) }{ \Vert g \Vert_{\rm Lip} d(x,y) } \right  \} \\
	& = \sup_{g} \left\{ \frac{ \Vert P_1[g] \Vert_{\rm Lip}}{ \Vert g \Vert_{\rm Lip} } \right\} .
	\end{align*}
	Therefore $\Vert P_1[g] \Vert_{\rm Lip} \leq (1-1/\alpha) \Vert g \Vert_{\text{Lip}}$
	for every Lipschitz function $g$ and thus
	\[
	\Vert P_n[f] \Vert_{\rm Lip} \leq (1-1/\alpha)^n \Vert f \Vert_{\text{Lip}} ,
	\]
	for every integer $n$.
	Inequality~\eqref{eq:step} then yields
	\[
	P_n\left[\exp(f)\right](x)
	\leq \exp \left( P_n[f](x)  + \frac {C \alpha}{4(2-1/\alpha)}  \Vert f \Vert_{\rm Lip}^2 \right) .
	\]
	Letting $n \to \infty$ yields
	\[
	\int_\Omega \exp(f) \, d \pi
	\leq \exp \left( \int_\Omega f \, d \pi + \frac {C \alpha}{4\,(2-1/\alpha)}  \Vert f \Vert_{\rm Lip}^2 \right) .
	\]
	The stationary measure $\pi$ thus satisfies Gaussian concentration
	with constant $\frac {C \alpha}{4\,(2-1/\alpha)}$. Another application of the duality, Lemma~\ref{lem:duality}, yields the desired outcome, proving Theorem~\ref{thm:main}.

\subsection{An explicit coupling}

As promised, we now present a second proof of Theorem~\ref{thm:main} based on
an explicit coupling.  The proof does not rely on duality,
and our hope is that the method presented will be
useful for establishing MLSI; see Section~\ref{sec:mlsi}.

The first step of the proof follows a similar idea to the one used in \cite[Proposition 1]{Marton96}. Given the random walk $\{B_t\}$ and another process $\{X_t\}$ (not necessarily Markovian), 
there is a natural coupling between the two processes that takes advantage of the curvature condition and gives a bound on the distance between the processes at time $T$ in terms of the relative entropy. This step is summarized in the following result.

\begin{proposition} \label{prop:counpling}
Assume that $(\Omega, p , d)$ satisfies the conditions of Theorem \ref{thm:main}.
Fix a time $T$ and a point $x_0\in M$. Let $\{B_0 =x_0 , B_1 , \dotsc , B_T\}$ 
be the corresponding discrete time random walk
starting from $x_0$ and let $\{X_0=x_0, X_1, \dotsc, X_T\}$ be an arbitrary 
random process on $\Omega$ starting from $x_0$.
Then, there exists a coupling between the processes 
$(X_t)$ and $(B_t)$ such that
$$
E[d(X_T,B_T)] \leq \sqrt{\frac {C \alpha}{ 2 - 1/\alpha } \relent{\{X_0, X_1, \ldots, X_T\}}{\{B_0, B_1, \ldots, B_T\}}}.
$$
\end{proposition}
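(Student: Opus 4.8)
The plan is to build the coupling one time-step at a time, spending at each step the one-step transport inequality \eqref{eq:onestep} together with the curvature contraction, and then to account for the total cost by the chain rule for relative entropy. Write $\mu$ for the law of $(X_0,\dots,X_T)$ and $P$ for the law of $(B_0,\dots,B_T)$ on $\Omega^{T+1}$, and let $\nu_{t+1}(\cdot\mid X_{\le t})$ denote the conditional law of $X_{t+1}$ given the past $X_{\le t}=(X_0,\dots,X_t)$. Since $P$ is the Markov chain started at $x_0$, the conditional law of $B_{t+1}$ given $B_{\le t}$ is simply $p(B_t,\cdot)$, and both processes start at the same point $x_0$. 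The chain rule for relative entropy then gives
$$
\relent{\mu}{P}=\sum_{t=0}^{T-1}e_t,\qquad e_t:=\E\!\left[\relent{\nu_{t+1}(\cdot\mid X_{\le t})}{p(X_t,\cdot)}\right],
$$
and this is the budget I will distribute across the steps of the coupling.

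First I would construct the coupling inductively. Suppose $(X_{\le t},B_{\le t})$ have already been coupled; conditionally on this history, $X_t$ and $B_t$ are fixed points of $\Omega$. I generate the next pair by gluing two optimal couplings through the auxiliary measure $p(X_t,\cdot)$: on one side, couple $X_{t+1}\sim\nu_{t+1}(\cdot\mid X_{\le t})$ to a variable $\tilde B_{t+1}\sim p(X_t,\cdot)$ realizing $W_1(\nu_{t+1}(\cdot\mid X_{\le t}),p(X_t,\cdot))$, which by \eqref{eq:onestep} is at most $\sqrt{C\,\relent{\nu_{t+1}(\cdot\mid X_{\le t})}{p(X_t,\cdot)}}$; on the other side, couple $\tilde B_{t+1}$ to $B_{t+1}\sim p(B_t,\cdot)$ realizing $W_1(p(X_t,\cdot),p(B_t,\cdot))$, which by the curvature hypothesis is at most $(1-1/\alpha)\,d(X_t,B_t)$. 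The gluing lemma produces a joint law of $(X_{t+1},\tilde B_{t+1},B_{t+1})$ with these prescribed pairwise marginals; discarding $\tilde B_{t+1}$ leaves a coupling of $X_{t+1}$ and $B_{t+1}$ with the correct conditional laws, so that the $X$-marginal of the whole construction is $\mu$ and the $B$-marginal is the Markov law $P$. Writing $a_t=\E[d(X_t,B_t)]$ and applying the triangle inequality together with Jensen's inequality (to pull the expectation inside the square root), I obtain the recursion
$$
a_{t+1}\le\left(1-\tfrac1\alpha\right)a_t+\sqrt{C\,e_t},\qquad a_0=0.
$$

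Setting $\lambda=1-1/\alpha$, unrolling the recursion gives $a_T\le\sum_{t=0}^{T-1}\lambda^{\,T-1-t}\sqrt{C\,e_t}$. The final step is to apply the Cauchy--Schwarz inequality so that the geometric weights appear \emph{squared}:
$$
a_T\le\sqrt{\sum_{t=0}^{T-1}\lambda^{2(T-1-t)}}\;\sqrt{\sum_{t=0}^{T-1}C\,e_t}\le\sqrt{\frac{1}{1-\lambda^2}}\,\sqrt{C\,\relent{\mu}{P}}.
$$
Since $1-\lambda^2=(1-\lambda)(1+\lambda)=\tfrac1\alpha\,(2-1/\alpha)$, the prefactor equals $\tfrac{C\alpha}{2-1/\alpha}$, which is exactly the claimed bound. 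The part requiring the most care is the inductive step: one must check that the glued kernels genuinely assemble into a coupling of the two \emph{path} measures — in particular that discarding the auxiliary variable preserves both the Markov conditional law $p(B_t,\cdot)$ for the $B$-process and the prescribed law $\nu_{t+1}$ for the $X$-process — and that all choices can be made measurably in the history. By contrast, once the recursion is in hand its solution is routine, and the sharp constant emerges automatically from pairing the \emph{square} of the contraction factor against the telescoped entropy budget.
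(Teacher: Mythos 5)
Your proposal is correct and follows essentially the same route as the paper: an inductive step-by-step coupling whose per-step cost is bounded by the one-step transport inequality \eqref{eq:onestep} plus the curvature contraction, combined with the chain rule for relative entropy and a Cauchy--Schwarz step that pairs the squared geometric weights $\left(1-\tfrac1\alpha\right)^{2k}$ against the entropy budget. The only cosmetic differences are that you unpack the $W_1$ triangle inequality into an explicit gluing through the auxiliary measure $p(X_t,\cdot)$ (the paper invokes the triangle inequality for $W_1$ directly on the optimal coupling of $\nu_{t+1}(\cdot \mid X_{\le t})$ and $p(B_t,\cdot)$), and that you apply Jensen before unrolling rather than folding it into the final Cauchy--Schwarz; both yield the identical constant.
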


In view of the above proposition, proving a transportation-entropy inequality for $(\Omega,p,d)$ is reduced to the following: given a measure $\nu$ on $\Omega$, we are looking for a process $\{X_t\}$ which satisfies: (i) $X_T \sim \nu$ and (ii) the relative entropy between $(X_0,\dotsc,X_T)$ and 
$(B_0,\dotsc,B_T)$ is as small as possible.

To achieve the above, our key idea is the construction of a process $X_t$ which is entropy minimal in the sense that it satisfies
\begin{equation}\label{eq:entoptimal}
X_T \sim \nu \mbox{  and  } \relent{\{X_0, X_1, \ldots, X_T\}}{\{B_0, B_1, \ldots, B_T\}} = \relent{X_T}{B_T}.
\end{equation}
This process can be thought of as the Doob transform of the random walk with a given target law. In the setting of Brownian motion on $\mathbb R^n$ equipped with the Gaussian measure, the corresponding process appears in work of F\"ollmer \cite{Follmer88,Follmer86}. See \cite{Lehec13} for applications to functional inequalities, and the work of L\'eonard \cite{Leonard14} for a somewhat different
perspective on the connection to optimal transportation.

\subsubsection{Proof of Proposition \ref{prop:counpling}: Construction of the coupling }

Given $t\in \{1, \dotsc , T\}$ and $x_1,\dotsc , x_{t-1} \in M$, 
let $\nu ( t, x_0 , \dotsc , x_{t-1} ,  \cdot )$ be the conditional 
law of $X_t$ given $X_0=x_0 , \dotsc , X_{t-1} = x_{t-1}$. 
Now we construct the coupling of $X$ and $B$ as follows. 
Set $X_0=B_0=x_0$ and given 
$(X_1,B_1) , \dotsc , (X_{t-1} , B_{t-1})$ set $(X_t , B_t)$ 
to be a coupling of $\nu ( t , X_0 , \dotsc , X_{t-1}, \cdot  )$ and $p ( B_{t-1} , \cdot )$
which is optimal for $W_1$. Then by construction   
the marginals of this process coincide with the original 
processes $\{X_t\}$ and $\{B_t\}$.
 
The next lemma follows from the coarse Ricci curvature property and the definition 
of our coupling.
\begin{lemma}
For every $t \in \{ 1 , \dotsc, T\}$,
\[
\E_{t-1} \left[ d(X_{t},B_{t}) \right]
\leq \sqrt{C \cdot \relent{ \nu ( t , X_0 , \dotsc , X_{t-1} , \cdot )} {p(X_{t-1}, \cdot)} } +
		\left(1-\frac{1}{\alpha}\right) d(X_{t-1},B_{t-1} ) 
\]
where $\E_{t-1}[\cdot]$ stands for the conditional 
expectation given $(X_0,B_0), \dotsc, (X_{t-1} , B_{t-1})$.
\end{lemma}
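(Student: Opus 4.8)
The plan is to recognize the left-hand side as a single Wasserstein distance and then split it via the triangle inequality through a well-chosen intermediate measure. First I would note that, conditionally on the past $(X_0,B_0),\dotsc,(X_{t-1},B_{t-1})$, the points $X_{t-1}$ and $B_{t-1}$ are fixed and, by construction, $(X_t,B_t)$ is an optimal $W_1$-coupling of $\nu(t,X_0,\dotsc,X_{t-1},\cdot)$ and $p(B_{t-1},\cdot)$. Consequently the conditional expected cost of this coupling is exactly the transport distance:
\[
\E_{t-1}[d(X_t,B_t)] = W_1\bigl(\nu(t,X_0,\dotsc,X_{t-1},\cdot),\, p(B_{t-1},\cdot)\bigr).
\]

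Next I would insert $p(X_{t-1},\cdot)$ --- the honest one-step transition out of $X_{t-1}$ --- as an intermediate measure and invoke the triangle inequality for $W_1$:
\[
W_1\bigl(\nu(t,X_0,\dotsc,X_{t-1},\cdot), p(B_{t-1},\cdot)\bigr) \leq W_1\bigl(\nu(t,X_0,\dotsc,X_{t-1},\cdot), p(X_{t-1},\cdot)\bigr) + W_1\bigl(p(X_{t-1},\cdot), p(B_{t-1},\cdot)\bigr).
\]
The first term measures how far the conditional law of $X_t$ is from a genuine random-walk step, so the one-step hypothesis~\eqref{eq:onestep} controls it by $\sqrt{C\cdot \relent{\nu(t,X_0,\dotsc,X_{t-1},\cdot)}{p(X_{t-1},\cdot)}}$. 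The second term compares two genuine transitions from the neighboring points $X_{t-1}$ and $B_{t-1}$, so the coarse Ricci curvature assumption bounds it by $(1-1/\alpha)\,d(X_{t-1},B_{t-1})$. Adding the two estimates yields the claimed inequality.

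I do not expect a serious technical obstacle; the content is really in the choice of intermediate measure $p(X_{t-1},\cdot)$, which cleanly separates the two mechanisms at work --- the entropic cost of $\{X_t\}$ deviating from the random-walk law, paid through the one-step $(T_1)$ inequality, and the geometric contraction between two honest walk steps, paid through curvature. The only point requiring care is measurability bookkeeping: each of $\nu(t,X_0,\dotsc,X_{t-1},\cdot)$, $p(X_{t-1},\cdot)$, and $d(X_{t-1},B_{t-1})$ is determined by the conditioning $\sigma$-algebra, so the opening identity and both subsequent bounds should be read as pointwise statements on the event that fixes the past, after which passing to $\E_{t-1}$ is immediate.
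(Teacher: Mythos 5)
Your proposal is correct and follows exactly the paper's own argument: the same identification of $\E_{t-1}[d(X_t,B_t)]$ with $W_1\bigl(\nu(t,X_0,\dotsc,X_{t-1},\cdot),p(B_{t-1},\cdot)\bigr)$ via optimality of the coupling, the same triangle inequality through the intermediate measure $p(X_{t-1},\cdot)$, and the same application of the one-step hypothesis~\eqref{eq:onestep} and the curvature condition to the two resulting terms. No gaps; your measurability remark is a fair (if routine) point the paper leaves implicit.
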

\begin{proof}
By definition of the coupling, the triangle inequality for $W_1$, 
the one-step transport inequality~\eqref{eq:onestep}
and the curvature condition
\begin{align*}
\E_{t-1}  \left[d(X_{t},B_{t})\right] 
& =  W_1 \left( \nu (t,X_0,\dotsc, X_{t-1} , \cdot ) , p( B_{t-1} , \cdot ) \right) \\ 
& \leq W_1 \left( \nu (t,X_0,\dotsc, X_{t-1} , \cdot ) , p( X_{t-1} , \cdot) \right)  
+ W_1 \left( p ( X_{t-1} , \cdot ) , p ( B_{t-1}, \cdot ) \right)\\
& \leq \sqrt{C \cdot \relent{ \nu (t,X_0,\dotsc, X_{t-1} , \cdot ) }{p(X_{t-1}, \cdot)}}
	+ \left(1-\frac{1}{\alpha}\right) d(X_{t-1}, B_{t-1})\,. \quad \qedhere
\end{align*}
\end{proof}
Remark that the chain rule for relative entropy asserts that
\begin{equation}\label{eq:chainrule}
\sum_{t=1}^T \E[\relent{ \nu (t,X_0,\dotsc, X_{t-1} , \cdot ) }{ p(X_{t-1}, \cdot) } ] =
	\relent{\{X_0, X_1, \ldots, X_T\}}{\{B_0, B_1, \ldots, B_T\}}.
	\end{equation}
Using the preceding lemma inductively and  then Cauchy-Schwarz yields
	\begin{align*}
	\E[d(X_T,B_T)] & \leq \sum_{t=1}^T \left(1-\frac{1}{\alpha}\right)^{T-t} \E\left[ \sqrt{C \cdot \relent{ \nu (t,X_0,\dotsc, X_{t-1} , \cdot ) }{ p(X_{t-1}, \cdot) } }\right ]  \\
	&\leq
	\sqrt{\sum_{t=1}^T \left(1-\frac{1}{\alpha}\right)^{2(T-t)}}
	\sqrt{\sum_{t=1}^T  C\cdot \E\left[\relent{ \nu (t,X_0,\dotsc, X_{t-1} , \cdot ) }{p(X_{t-1}, \cdot)} \right ]} \nonumber\\
	& \stackrel{\eqref{eq:chainrule}}{\leq}
	\sqrt{\frac{\alpha}{2-1/\alpha}}
	\sqrt{C \cdot \relent{\{X_0, X_1, \ldots, X_T\}}{\{B_0, B_1, \ldots, B_T\}}},
	\end{align*}
	completing the proof of Proposition~\ref{prop:counpling}.

\subsubsection{The entropy-optimal drift process}
\label{sec:follmer}

Our goal in this section is to construct a process 
$x_0=X_0,X_1,\dots,X_T$ satisfying equation \eqref{eq:entoptimal}.
Suppose that we are given a measure $\nu$ on $\Omega$ 
along with an initial point $x_0 \in \Omega$ and a time $T \geq 1$.
We define the {\em F\"ollmer drift process} associated to $(\nu,x_0,T)$ as the stochastic
process $\{X_t\}_{t=0}^T$ defined as follows.  

Let $\mu_T$ be the law of $B_T(x_0)$ and denote by $f$ the density of $\nu$ with respect to $\mu_T$. Note that $f$ is well-defined as long as the support of $\mu_T$ is $\Omega$. Now let 
$\{X_t\}_{t=0}^T$ be the non homogeneous Markov chain on $\Omega$ whose transition 
probabilities at time $t$ are given by 
\begin{equation}\label{eq:follmer-discrete}
q_{t}(x,y) := \mathbb P ( X_{t} = y ) \mid X_{t-1} = x ) 
=  \frac{P_{T-t} f (y)}{P_{T-t+1} f(x)}\, p (x,y) . 
\end{equation}
We will take care in what follows to ensure the denominator does not vanish.
Note that $(q_t)$ is indeed a transition matrix as 
\begin{equation}\label{eq:prob}
\sum_{y \in \Omega} P_{T-t} f (y) \, p (x,y)  = P_{T-t+1} f (x ) . 
\end{equation}

We now state a key property of the drift.
\begin{lemma}\label{lem:follmer-density}
If $p^T(x_0,x) > 0$ for all $x \in \mathrm{supp}(\mu)$, then
$\{X_t\}$ is well-defined.
Furthermore, for every $x_1 , \dotsc , x_T \in \Omega$ we have 
\begin{equation} \label{eq:com}
\mathbb P \left( (X_1,\ldots,X_T)=(x_1,\ldots,x_T) \right)
= \mathbb P \left( (B_1,\ldots,B_T)=(x_1,\ldots,x_T) \right) \, f(x_T) . 
\end{equation}
In particular $X_T$ has law $d \nu = f \, d\mu_T$.
\end{lemma}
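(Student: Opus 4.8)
The plan is to prove the product formula \eqref{eq:com} by a direct induction on $t$, computing the law of the Markov chain $\{X_t\}$ explicitly from its transition kernel \eqref{eq:follmer-discrete} and observing a telescoping cancellation. The key structural feature to exploit is that the denominator $P_{T-t+1}f(x)$ in the transition probability $q_t(x,y)$ cancels against the numerator $P_{T-(t-1)}f(x) = P_{T-t+1}f(x)$ appearing at the previous step, so that the product of transition probabilities along a path collapses to a single surviving factor.

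First I would settle well-definedness. Since $f$ is the density of $\nu$ with respect to $\mu_T$, it is nonnegative, and $P_{T-t}f(y) = \E[f(B_{T-t}(y))] \geq 0$ for every $t$ and $y$. The only danger is a vanishing denominator $P_{T-t+1}f(x)$. Under the hypothesis $p^T(x_0,x) > 0$ for all $x \in \mathrm{supp}(\mu)$, I would argue that every state reachable by the chain $\{X_t\}$ from $x_0$ has strictly positive $P_{T-t+1}f$, so the kernel $q_t$ is never evaluated at a point where its denominator is zero; one checks by a short induction that $\mathbb P(X_{t-1}=x)>0$ forces $P_{T-t+1}f(x)>0$. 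Simultaneously, identity \eqref{eq:prob}, which is just the Markov-semigroup relation $P_{T-t}f \cdot p = P_{T-t+1}f$ evaluated pointwise, confirms that $\sum_y q_t(x,y)=1$, so $q_t$ is a genuine transition matrix.

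The heart of the argument is the explicit computation of the joint law. For a fixed path $(x_1,\dots,x_T)$ starting at $X_0=x_0$, the Markov property gives
\[
\mathbb P\big((X_1,\dots,X_T)=(x_1,\dots,x_T)\big) = \prod_{t=1}^T q_t(x_{t-1},x_t)
= \prod_{t=1}^T \frac{P_{T-t}f(x_t)}{P_{T-t+1}f(x_{t-1})}\, p(x_{t-1},x_t).
\]
Separating the two $P$-factors, the numerators run over $P_{T-t}f(x_t)$ for $t=1,\dots,T$, i.e.\ $P_{T-1}f(x_1),\dots,P_0 f(x_T)$, while the denominators run over $P_{T-t+1}f(x_{t-1})$, i.e.\ $P_T f(x_0),\dots,P_1 f(x_{T-1})$. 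Since $P_{T-t}f(x_t)$ at step $t$ equals $P_{T-(t+1)+1}f(x_t)$, the denominator of step $t+1$, all factors telescope except the very first denominator $P_T f(x_0)=\mu_T f(x_0)$-type normalization and the very last numerator $P_0 f(x_T)=f(x_T)$. The surviving product of transition kernels $\prod_{t=1}^T p(x_{t-1},x_t)$ is exactly $\mathbb P((B_1,\dots,B_T)=(x_1,\dots,x_T))$, and $P_T f(x_0) = \E[f(B_T(x_0))] = \int f\,d\mu_T = \int d\nu = 1$ since $f$ is a probability density. This yields precisely \eqref{eq:com}.

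I expect the main obstacle to be purely bookkeeping rather than conceptual: getting the telescoping indices exactly right and verifying that the two leftover boundary terms are $f(x_T)$ and the normalizing constant $P_T f(x_0)=1$. One must be careful that $P_T f(x_0)$ integrates to $1$ precisely because $\nu$ is a probability measure with density $f$ against $\mu_T$. The final claim, that $X_T\sim f\,d\mu_T=\nu$, then follows by summing \eqref{eq:com} over $x_1,\dots,x_{T-1}$ with $x_T$ fixed: the sum of $\mathbb P((B_1,\dots,B_T)=(x_1,\dots,x_T))$ over the first $T-1$ coordinates is the marginal $\mathbb P(B_T=x_T)=\mu_T(x_T)$, so $\mathbb P(X_T=x_T)=\mu_T(x_T)f(x_T)$, as desired.
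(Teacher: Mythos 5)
Your proof is correct and follows essentially the same route as the paper's: expand the path probability via the Markov property, telescope the ratios $P_{T-t}f(x_t)/P_{T-t+1}f(x_{t-1})$, and identify the surviving factors $f(x_T)$ and $P_T f(x_0)=\int f\,d\mu_T=1$. Your additional details---the induction showing reachable states have nonvanishing denominators, and the summation over $x_1,\dotsc,x_{T-1}$ for the marginal of $X_T$---are exactly the points the paper leaves implicit, and they are handled correctly.
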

\begin{proof}
By definition of the process $(X_t)$ we have 
\[
\begin{split}
\mathbb P \left( (X_1,\ldots,X_T)=(x_1,\ldots,x_T) \right)
& = \prod_{t=1}^T \mathbb P 
\left( X_t = x_t \mid  (X_1,\ldots,X_{t-1} )=(x_1,\ldots,x_{t-1} ) \right) \\
& = \prod_{t=1}^T \frac{ P_{T-t} f (x_t) }{ P_{T-t+1} f ( x_{t-1} } p( x_{t-1},x_t) \\
& = \frac{ f (x_T) }{ P_T f (x_0) } \left(\prod_{t=1}^T p(x_{t-1},x_t)\right)  ,
\end{split}
\]
which is the result. 
\end{proof}
In words, the preceding lemma asserts that the law of the process 
$\{ X_t\}$ has density $f ( x_T )$ with respect to the law of the 
process $\{ B_t \}$. As a result we have in particular 
\[
\relent{\{X_0, X_1, \ldots, X_T\}}{\{B_0, B_1, \ldots, B_T\}} = 
\E [ \log f ( X_T ) ] = \relent{ \nu }{ \mu_T } ,  
\]
since $X_T$ has law $\mu$. 
Note that for any other process $\{Y_t\}$ such that $Y_0 = x_0$ and $Y_T$ has law $\nu$, 
one always has the inequality
\begin{equation}\label{eq:inequality}
\relent{\{Y_0, Y_1, \ldots, Y_T\}}{\{B_0, B_1, \ldots, B_T\}} \geq \relent{Y_T}{B_T} 
= \relent { \nu }{ \mu_T } . 
\end{equation}
Besides $\{X_t\}$ is the unique random process for which this inequality is tight.
Uniqueness follows from strict convexity of the relative entropy.

We summarize this section in the following lemma.
\begin{lemma} \label{lem:follmer}
Let $(\Omega, p)$ be a Markov chain. Fix $x_0 \in \Omega$ and let $x_0 = B_0, B_1, \dots$ be the associated random walk. Let $\nu$ be a measure on $\Omega$ and let $T>0$ be such that for any $y \in \Omega$ one has that $\mathbb{P}(B_T = y) > 0$. Then there exists a process $x_0 = X_0, X_1, \dots, X_T$ such that:
\begin{itemize}
\item
$\{X_0,\dotsc, X_T\}$ is a (time inhomogeneous) Markov chain.
\item
$X_T$ is distributed with the law $\nu$.
\item
The process satisfies equation \eqref{eq:entoptimal}, namely
$$
\relent{X_T}{B_T} = \relent{\{X_0, X_1, \ldots, X_T\}}{\{B_0, B_1, \ldots, B_T\}}.
$$
\end{itemize}
\end{lemma}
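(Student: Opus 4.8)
The plan is to exhibit the Föllmer drift process constructed in Section~\ref{sec:follmer} as the witness, so that the lemma reduces to collecting facts already established. First I would set $\mu_T$ to be the law of $B_T(x_0)$; since the hypothesis guarantees $\mathbb P(B_T = y) > 0$ for every $y \in \Omega$, the measure $\mu_T$ has full support and the density $f = d\nu/d\mu_T$ is well defined on all of $\Omega$. I then take $\{X_t\}_{t=0}^T$ to be the time-inhomogeneous chain started at $X_0 = x_0$ with transition probabilities $q_t(x,y)$ given by~\eqref{eq:follmer-discrete}. The first bullet point is then immediate, since $q_t(x,y)$ depends only on the time $t$ and the current state $x$.

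Next I would check that the construction is legitimate. That $q_t(x,\cdot)$ is a genuine probability kernel is exactly the normalization~\eqref{eq:prob}, which is the semigroup identity $P_{T-t+1} = P_1 P_{T-t}$ written out. The only point requiring care is that the denominators $P_{T-t+1}f(x)$ never vanish at states the process can actually occupy; I would establish this by induction. The base case is $P_T f(x_0) = \int f\, d\mu_T = 1 > 0$. For the inductive step, if $X_{t-1} = x$ is reached with positive probability then $P_{T-t+1}f(x) > 0$, and since $P_{T-t+1}f(x) = \sum_y P_{T-t}f(y)\, p(x,y)$ there is at least one $y$ with $p(x,y) > 0$ and $P_{T-t}f(y) > 0$; moreover any $y$ reached with $q_t(x,y) > 0$ satisfies $P_{T-t}f(y) > 0$, which is precisely the denominator needed at the following step. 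This shows $\{X_t\}$ runs all the way to time $T$ with no division by zero, giving the well-definedness of Lemma~\ref{lem:follmer-density}.

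For the remaining two bullets I would invoke Lemma~\ref{lem:follmer-density} directly. Its joint-law formula~\eqref{eq:com} shows that the law of $(X_0,\dots,X_T)$ has density $f(x_T)$ — a function of the terminal coordinate alone — with respect to the law of $(B_0,\dots,B_T)$; marginalizing over $x_1,\dots,x_{T-1}$ and using $\mu_T(\cdot) = \mathbb P(B_T = \cdot)$ gives $\mathbb P(X_T = y) = f(y)\mu_T(y) = \nu(y)$, which is the second bullet. For the third, the path-space relative entropy equals the $\{X_t\}$-expectation of the logarithm of this density, so
\[
\relent{\{X_0,\dots,X_T\}}{\{B_0,\dots,B_T\}}
= \E[\log f(X_T)]
= \int_\Omega f \log f \, d\mu_T
= \relent{\nu}{\mu_T}
= \relent{X_T}{B_T},
\]
where the middle equality uses $X_T \sim \nu$ and the last uses $B_T \sim \mu_T$. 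This is exactly~\eqref{eq:entoptimal}.

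I expect the genuinely delicate step to be the well-definedness argument of the second paragraph: one must verify that the reweighting by $P_{T-t}f$ never forces the chain into a state where the normalizing denominator degenerates, and it is the full-support hypothesis $\mathbb P(B_T = y) > 0$ that rules this out. Everything else is bookkeeping: the Markov property is built into~\eqref{eq:follmer-discrete}, and the marginal and entropy identities are formal consequences of the density formula~\eqref{eq:com}, which already isolates the dependence on the final coordinate.
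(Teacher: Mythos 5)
Your proposal is correct and follows essentially the same route as the paper: take the F\"ollmer drift defined by~\eqref{eq:follmer-discrete}, verify it is a legitimate time-inhomogeneous Markov chain via~\eqref{eq:prob}, and derive both the marginal law $X_T \sim \nu$ and the entropy identity~\eqref{eq:entoptimal} from the path-density formula~\eqref{eq:com} of Lemma~\ref{lem:follmer-density}. Your inductive argument that the denominators $P_{T-t+1}f$ stay positive along any trajectory the chain can occupy is a careful fill-in of a point the paper only asserts, but it does not change the structure of the proof.
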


\subsection{Finishing up the proof}

Fix an arbitrary $x_0 \in \Omega$ and consider some $T \geq \mathrm{diam}(\Omega,d)$.
Let $\{X_t\}$ be the F\"ollmer drift process associated to the initial data $(\nu,x_0,T)$.
Then combining Proposition \ref{prop:counpling} and equation \eqref{eq:entoptimal},
we have
\begin{equation}\label{eq:pre-tran}
W_1(\nu, \mu_T) 
= \E[d (X_T, B_T) ] \leq \sqrt{\frac{C\alpha}{2-1/\alpha} \,\relent{\nu}{\mu_T} }.
\end{equation}
Now let $T \to \infty$ so that $\mu_T \to \pi$,	yielding the desired claim.

\section{The Peres-Tetali conjecture and log-Sobolev inequalities}
\label{sec:mlsi}

Recall that $p : \Omega \times \Omega \to [0,1]$ is
a transition kernel on the finite state space $\Omega$
with a unique stationary measure $\pi$.
Let $L^2(\Omega,\pi)$ denote the space of real-valued
functions $f : \Omega \to \R$ equipped with the
inner product $\langle f,g\rangle = \E_{\pi}[fg]$.
From now on we assume that the measure $\pi$ is 
reversible, which amounts to saying that the operator
$f \mapsto p f$ is self-adjoint in $L^2 ( \Omega ,\pi )$. 

We define the associated Dirichlet form
\[
\cE(f,g) = \langle f , (p-I)  g \rangle = 
\tfrac12 \sum_{x,y \in \Omega} \pi(x) p(x,y) (f(x) - f(y)) (g(x)-g(y)) \,.
\]
Recall the definition of the entropy of a function $f : \Omega \to \R_+$:
\[
\Ent_{\pi}(f) = \E_{\pi} \left[ f \log \left( \frac{f}{\E_{\pi} f} \right) \right]\,.
\]

Now define the quantities
\begin{align*}
\rho &= \inf_{f : \Omega \to \R_+} \frac{\cE(\sqrt{f},\sqrt{f})}{\Ent_{\pi}(f)} \\
\rho_0 &= \inf_{f : \Omega \to \R_+} \frac{\cE(f,\log f)}{\Ent_{\pi}(f)}\,.
\end{align*}
These numbers are called, respectively, 
the {\em log-Sobolev} and {\em modified log-Sobolev} constants of
the chain $(\Omega,p)$.
We refer to \cite{MT06} for a detailed discussion of such inequalities 
on discrete-space Markov
chains and their relation to mixing times.

One can understand both numbers as measuring the rate of convergence to equilibrium
in appropriate senses.  The modified log-Sobolev constant, in particular,
can be equivalently characterized as the largest value $\rho_0$ such that
\begin{equation}\label{eq:isequiv}
\Ent_{\pi} (H_t f) \leq e^{-\rho_0 t} \Ent_{\pi}(f)
\end{equation}
for all $f : \Omega \to \R_+$ and $t > 0$ (see \cite[Prop. 1.7]{MT06}).
Here, $H_t : L^2(\Omega,\pi) \to L^2(\Omega,\pi)$ is the heat-flow operator
associated to the {\em continuous-time} random walk, i.e., $H_t = e^{-t (I-P)}$,
where $P$ is the operator defined by $P f(x) = \sum_{y \in \Omega} p(x,y) f(y)$.

The log-Sobolev constant $\rho$ controls the hypercontractivity 
of the semigroup $(H_t)$, which in turn yields a stronger notion of 
convergence to equilibrium; again see~\cite{MT06} for a precise statement. 
Interestingly, in the setting of diffusions,
there is no essential distinction between the two notions; one should
consider the following calculation only in a formal sense:
\[
``\ \cE(f, \log f) = \int \nabla f \nabla \log f = \int \frac{|\nabla f|^2}{f} = 4 \int \left|\nabla \sqrt{f}\right|^2 = 4\, \cE\!\left(\sqrt{f},\sqrt{f}\right)\,.\ ''
\]
However, in the discrete-space setting, the tools of differential calculus
are not present.  Indeed, one has the bound $\rho \leq 2 \rho_0$ \cite[Prop 1.10]{MT06}, but there
is no uniform bound on $\rho_0$ in terms of $\rho$.

\subsection{MLSI and curvature}

We are now in position to state an important conjecture
linking curvature and the modified log-Sobolev constant;
it asserts that on spaces with positive coarse Ricci curvature,
the random walk should converge to equilibrium exponentially
fast in the relative entropy distance.

\begin{conjecture}[Peres-Tetali, unpublished]
\label{conj:pt}
Suppose $(\Omega,p)$ corresponds to lazy random walk on a finite graph and $d$ is the graph distance.  If $(\Omega,p,d)$ has coarse Ricci curvature $\kappa > 0$, then the modified log-Sobolev constant satisfies
\begin{equation}\label{eq:pt}
\rho_0 \geq C \kappa\,.
\end{equation}
where $C > 0$ is a universal constant.
\end{conjecture}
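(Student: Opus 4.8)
The plan is to run the entropy-optimal (F\"ollmer) drift of Section~\ref{sec:follmer} one level deeper: rather than extracting only a transport bound from it, I would use it to control the \emph{rate} of entropy dissipation. By the equivalence~\eqref{eq:isequiv}, it suffices to prove the infinitesimal inequality $\cE(f,\log f) \geq \kappa\,\Ent_{\pi}(f)$ for every $f:\Omega\to\R_+$. So fix a density $f$, set $\nu = f\,d\pi$, and consider the drift process $\{X_t\}_{t=0}^T$ steering $\delta_{x_0}$ to $\nu$, in the regime $T\to\infty$ where $\mu_T\to\pi$. The engine is the decomposition of relative entropy into local costs: writing $D_t := \relent{\nu(t,X_0,\dots,X_{t-1},\cdot)}{p(X_{t-1},\cdot)}$ for the one-step conditional cost, the chain rule~\eqref{eq:chainrule} together with entropy-optimality~\eqref{eq:entoptimal} gives
\[
\relent{\nu}{\mu_T} \;=\; \sum_{t=1}^{T} \E[D_t].
\]

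A direct computation with the drift kernel~\eqref{eq:follmer-discrete} identifies $\E[D_t]$ as a telescoping one-step entropy increment: the marginal of $X_t$ has density $P_{T-t}f$ with respect to $\mu_t = \mathrm{law}(B_t)$, and $\E[D_t] = \Ent_{\mu_t}(P_{T-t}f) - \Ent_{\mu_{t-1}}(P_{T-t+1}f)$. In particular the final cost $\E[D_T]$ plays the role of the one-step entropy dissipation of $f$, whose continuous-time counterpart is exactly $\cE(f,\log f)$, while the left-hand side tends to $\Ent_{\pi}(f)$. Thus the Peres-Tetali bound $\rho_0 \geq \kappa$ would follow from the single structural estimate that the local costs contract geometrically backward in time,
\[
\E[D_t] \;\leq\; (1-\kappa)^{T-t}\,\E[D_T], \qquad 1 \leq t \leq T,
\]
since summing the geometric series gives $\Ent_{\pi}(f) = \sum_{t} \E[D_t] \leq \E[D_T]\sum_{j\ge 0}(1-\kappa)^{j} \leq \E[D_T]/\kappa$, i.e. $\cE(f,\log f) \geq \kappa\,\Ent_{\pi}(f)$. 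This is the structural property of the entropy-minimal drift that I would isolate as being equivalent to the conjecture, and the whole argument reduces to it.

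The shape of the required contraction is dictated by the curvature hypothesis: the computation in the duality proof shows $\Vert P_n g\Vert_{\mathrm{Lip}} \leq (1-\kappa)^n \Vert g \Vert_{\mathrm{Lip}}$, so as the density $P_{T-t}f$ is smoothed backward along the walk its ``slope'' contracts at rate $1-\kappa$ per step, which is precisely the behavior one wants for the increments $\E[D_t]$. The key step, and the main obstacle, is turning this first-order contraction into the displayed entropy contraction. The curvature condition is an $L^1$/Lipschitz statement tailored to $W_1$, whereas $\E[D_t]$ is an $L\log L$ quantity; in the diffusion setting one bridges the two via the chain rule for $\nabla \log$ and a Bochner-type identity, but in discrete space neither tool is available, and this is exactly the gap reflected in the fact that $\rho_0$ admits no uniform lower bound in terms of $\rho$. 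I therefore expect the realistic deliverable to be a conditional theorem: MLSI with $\rho_0 \geq \kappa$ holds once one establishes the geometric contraction of the F\"ollmer drift's local costs, and proving \emph{that} contraction from Ollivier curvature alone is the crux that keeps the conjecture open.
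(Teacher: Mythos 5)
You were asked about the Peres--Tetali conjecture, which is open: the paper does not prove it, and your proposal (appropriately) does not claim to either. What the paper does instead, in Section~\ref{sec:entropy-interp}, is reformulate the conjecture as an equivalent statement about the entropy-optimal drift, namely Conjecture~\ref{conj:mlsi}: $\relent{X_T}{B_T} \leq C\alpha I_T$ for the \emph{continuous-time} F\"ollmer process. Your proposal is, in essence, the discrete-time rendition of exactly this program, so in spirit you have matched the paper's approach. Your intermediate computations are also sound: by Lemma~\ref{lem:follmer-density} the law of $X_t$ indeed has density $P_{T-t}f$ with respect to $\mu_t$ (with $f$ the density of $\nu$ relative to $\mu_T$), and the telescoping identity $\E[D_t] = \Ent_{\mu_t}(P_{T-t}f) - \Ent_{\mu_{t-1}}(P_{T-t+1}f)$ then follows from the chain rule~\eqref{eq:chainrule}.

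Two deviations from the paper deserve flags, one of which is a genuine error of logic. First, your structural condition $\E[D_t] \leq (1-\kappa)^{T-t}\,\E[D_T]$ is a \emph{term-wise} geometric decay, whereas the paper's Conjecture~\ref{conj:mlsi} is only the \emph{aggregate} bound (total information spent at most $C\alpha$ times the final rate). Your condition implies the aggregate one by summing the series, but nothing shows the converse: MLSI does not obviously force the local costs to decay geometrically at each individual time. So your claim that the term-wise contraction is ``equivalent'' to the conjecture is unjustified; you have isolated a sufficient condition, and a strictly stronger intermediate conjecture carries the risk of being false even if Peres--Tetali is true. The paper's aggregate formulation, by contrast, is genuinely equivalent, via the limits $\relent{X_T}{B_T} \to \Ent_\pi(f)$ and $I_T \to \cE(f,\log f)$. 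Second, working in discrete time creates a mismatch you gloss over: the limit of $\E[D_T]$ is the one-step dissipation $\Ent_\pi(f) - \Ent_\pi(Pf)$, \emph{not} $\cE(f,\log f)$. This can be repaired, because the discrepancy has a sign: under reversibility $\cE(f,\log f) - \left[\Ent_\pi(f) - \Ent_\pi(Pf)\right] = \E_\pi\!\left[Pf \log (Pf/f)\right] = \relent{Pf\,\pi}{f\,\pi} \geq 0$, so a bound on the discrete dissipation still yields $\rho_0 \geq \kappa$; but this inequality must be stated and proved, and it is precisely to avoid such frictions that the paper sets up the drift in continuous time, where $I_T \to \cE(f,\log f)$ holds on the nose. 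With those two corrections, your reduction coincides with the paper's, and --- as you correctly conclude --- deriving the required contraction of the drift's local costs from Ollivier curvature alone is exactly the crux that remains open.
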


A primary reason for our interest in Corollary~\ref{cor:main} is that, by results of
Sammer \cite{Sammer05}, Conjecture~\ref{conj:pt} implies Corollary~\ref{cor:main}.
We suspect that a stronger conclusion should hold in many cases;
under stronger assumptions, it should be that one can obtain a lower bound
on the (non-modified) log-Sobolev constant $\rho \geq C \kappa$.
See, for instance, the beautiful approach of Marton \cite{Marton15}
that establishes a log-Sobolev inequality for product spaces assuming
somewhat strong contraction properties of the Gibbs sampler.

However, we recall that this cannot hold under just 
the assumptions of Conjecture~\ref{conj:pt}.
Indeed, if $G=(V,E)$ is the complete graph on $n$ vertices, it is easy to see that the
coarse Ricci curvature $\kappa$ of the lazy random walk is $1/2$.
On the other hand, one can check that the log-Sobolev constant $\rho$ 
decays asymptotically like $\frac{1}{\log n}$ (use the test function $f = \delta_x$ for some fixed $x \in V$).

\subsection{An entropic interpolation formulation of MLSI}
\label{sec:entropy-interp}

We now suggest an approach to Conjecture~\ref{conj:pt} using an entropy-optimal drift process.
While we chose to work with discrete-time chains in Section~\ref{sec:follmer},
working in continuous-time will allow us more precision in 
exploring Conjecture~\ref{conj:pt}.
We will use the notation introduced at the beginning of this section.

\medskip
\noindent
{\bf A continuous-time drift process.}
Suppose we have some initial data $(f,x_0,T)$
where $x_0 \in \Omega$ and $f : \Omega \to \R_+$
satisfies $\E_{\pi}[f] = 1$.
Let $\{B_t : t \in [0,\infty)\}$ denote the continuous-time random walk
with jump rates $p$ on the discrete state space $\Omega$ 
starting from $x_0$. We let $\mu_T$ be the law of $B_T$ and let 
$\nu$ be the probability measure defined by 
\[
d \nu =  \frac{ f }{ H_T f ( x_0 ) } \, d\mu_T ,  
\]
where $(H_t)$ is the semigroup associated to the jump rates $p(x,y)$. 
Note that $\nu$ is indeed a probability measure as $\int f \, d\mu_T = H_T f ( x_0 )$
by definition of $\mu_T$. 

We now define the continuous-time F\"ollmer drift process associated to 
the data $(x_0, T , f)$ as the (time inhomogeneous) Markov chain $\{X_t , \, t \leq T\}$
starting from $x_0$ and having transition rates at time $t$ given by
\begin{equation}\label{eq:follmer-continuous}
 q_t(x,y) = p(x,y) \frac{H_{T-t} f(y)}{H_{T-t} f(x)}\,.
\end{equation}
Informally this means that the conditional probability that the process 
$\{X_t\}$ jumps from $x$ to $y$ between time $t$ and $t+dt$ given the past  
is $q_t(x,y) dt$. This should be thought as the continuous-time analogue
of the discrete F{\"o}llmer process defined by~\eqref{eq:follmer-discrete}. 
We claim that again the law of the process $\{ X_t , \, t \leq T \}$ 
has density $f ( x_T ) / H_T f ( x_0 )$ with respect to the law  
of $\{ B_t , \, t \leq T \}$. Let us give a brief justification 
of this claim. 
Define a new probability measure $\mathbb Q$ by setting
\begin{equation}\label{eq:defQ}
\frac{ d \mathbb Q } { d \mathbb P } = \frac{ f ( B_T ) }{ H_T f ( x_0 ) } . 
\end{equation}
We want to prove that the law of $B$ under $\mathbb Q$ coincides
with the law of $X$ under $\mathbb P$. 
Let $(\mathcal F_t)$ be the natural 
filtration of the process $(B_t)$, let $t\in [0,T)$ and let $y\in M$. 
We then have the following computation: 
\[
\begin{split}
\mathbb Q ( B_{t+\Delta t} = y \mid \mathcal F_t ) 
& = \frac { \E^{\mathbb P} [  f( B_T ) \, \mathbf 1_{ \{ B_{t+\Delta t} = y \} }  
\mid \mathcal F_t ] } { \E^{\mathbb P} [ f ( B_T ) \mid \mathcal F_t ] }  + o(\Delta t)\\
& = \frac { H_{T-t} f ( y ) }{ H_{T-t} f ( B_t ) }
\, \mathbb P ( B_{t+\Delta(t)} = y \mid \mathcal F_t ) + o(\Delta t)  \\
&= \frac { H_{T-t} f ( y ) }{ H_{T-t} f ( B_t ) } \, p ( B_t , y ) \, \Delta t + o(\Delta t) . 
\end{split}
\] 
This shows that under $\mathbb Q$, the process $\{B_t , \, t \leq T \}$
is Markovian (non homogeneous) with jump rates at time $t$ given 
by~\eqref{eq:follmer-continuous}. Hence the claim.
 
This implies in particular that $X_T$ has law $\nu$. 
This also yields the following formula for the relative entropy of $\{X_t\}$: 
\begin{equation}\label{eq:opt3}
\relent{ \{ X_t , \, t \leq T \} }{  \{ B_t , \, t \leq T \} } 
= \E \left[ \log \frac{f ( X_T )}{ H_T f ( x_0 ) } \right] = \relent { \nu } { \mu_T }  .
\end{equation}

The process $\{X_t\}$ starts from $x_0$ and has law $\nu$ at time $T$. 
Because $X_T$ has law $\nu$ and $B_T$ has law $\mu_T$, the two processes
must evolve differently.  One can think of the process $\{X_t\}$ as ``spending information''
in order to achieve the discrepancy between $X_T$ and $B_T$.  The amount of
information spent must at least account for the difference in laws at the endpoint, i.e.,
\[
\relent{\{X_t , \, t \leq T\} }{ \{ B_T , \, t \leq T \} } 
\geq \relent{X_T}{B_T} . 
\]
As pointed out in Section~\ref{sec:follmer}, the content
of \eqref{eq:opt3} is that $\{X_t\}$ spends exactly this minimum amount.

For $0 \leq s \leq s'$, we use the notations $B_{[s,s']} = \{ B_t : t \in [s,s']\}$ and
$X_{[s,s']} = \{ X_t : t \in [s,s'] \}$ for the corresponding trajectories. 
From the definition of $\mathbb Q$ we easily get
\begin{equation}\label{eq:change2}
\frac{d \mathbb Q}{d \mathbb P} \Big|_{\mathcal{F}_t} = 
\E \left[ \frac{ f ( B_T ) }{ H_T f(x_0 ) } \mid \mathcal F_t \right] = 
\frac{ H_{T-t} f ( B_t ) }{ H_T f ( x_0 ) } . 
\end{equation}
As a result 
\begin{equation}\label{eq:relent3}
\relent{X_{[0,t]}}{B_{[0,t]}} 
 = \E \left[ \log \frac{ H_{T-t} f ( X_t )}{ H_T f ( x_0 ) } \right]
\end{equation}
for all $t \leq T$. 
Let us now define the rate of information spent at time $t$:
\[
I_t = \frac{d}{dt} \relent{X_{[0,t]}}{B_{[0,t]}}\,.
\]
Intuitively, the entropy-optimal process $\{X_t\}$
will spend progressively more information as $t$ approaches $T$.
Information spent earlier in the process is less valuable
(as the future is still uncertain).
Let us observe that a formal version of this statement
for random walks on finite graphs
is equivalent to Conjecture~\ref{conj:pt}.
\begin{conjecture}\label{conj:mlsi}
Suppose $(\Omega,p)$ corresponds to a lazy random walk on a finite graph and $d$ is the graph distance, and that $(\Omega,p,d)$ has coarse Ricci curvature $1/\alpha$.
Given $f : \Omega \to \R_+$ with $\E_{\pi}[f]=1$ and $x_0 \in \Omega$,
for all sufficiently large times $T$, it holds that
if $\{X_t : t \in [0,T]\}$ is the associated continuous-time
F\"ollmer drift process
process with initial data $(f,x_0, T)$, then
\begin{equation}\label{eq:conj}
   \relent{X_T}{B_T} \leq C \alpha I_T \,,
\end{equation}
where $C > 0$ is a universal constant.
\end{conjecture}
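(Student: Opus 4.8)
The plan is to reduce the conjectured inequality to a single differential inequality for the information-rate $I_t$, and then to attempt that inequality via a discrete second-order (Bakry--Emery type) computation along the F\"ollmer interpolation. The starting point is an explicit formula for $I_t$. Writing $u(t,\cdot) = H_{T-t} f$ and recalling from \eqref{eq:relent3} that $\relent{X_{[0,t]}}{B_{[0,t]}} = \E[\log(H_{T-t}f(X_t)/H_T f(x_0))]$, I would differentiate in $t$, using the forward equation $\partial_t u = (I-P)u$ together with the generator of $\{X_t\}$ (whose rates are $q_t(x,y) = p(x,y)\,u(t,y)/u(t,x)$). After the routine cancellation this yields the Bregman form
\[
I_t = \E\!\left[\sum_{y} p(X_t, y)\,\psi\!\left(\frac{u(t,y)}{u(t,X_t)}\right)\right], \qquad \psi(r) = r\log r - r + 1 \geq 0,
\]
so that $I_t \geq 0$ and, by \eqref{eq:opt3} and $R(0)=0$, one has $\relent{X_T}{B_T} = \relent{\{X_t\}}{\{B_t\}} = \int_0^T I_t\,dt$. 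Thus \eqref{eq:conj} is exactly the assertion that the total information $\int_0^T I_t\,dt$ is dominated by $C\alpha$ times its terminal rate $I_T$, and everything hinges on how fast $I_t$ grows as $t \to T$.

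Second, I would reduce this integral bound to exponential growth of $I_t$. It suffices to prove the differential inequality
\[
\frac{d}{dt} I_t \geq \frac{1}{C\alpha}\, I_t, \qquad t \in [0,T],
\]
since then $I_t \leq e^{-(T-t)/(C\alpha)}\,I_T$, and integrating backward from $T$ gives $\int_0^T I_t\,dt \leq C\alpha\, I_T$, which is precisely \eqref{eq:conj}. This is the quantitative form of the heuristic in Section~\ref{sec:entropy-interp} that ``information spent earlier is less valuable'': the entropy-optimal process must accelerate its information spending at a curvature-controlled exponential rate as it approaches the terminal time. As a sanity check on the normalization, note the time reversal: in the limit $T\to\infty$ one has $\mu_t\to\pi$, the law of $X_t$ tends to the measure with $\pi$-density $H_{T-t}f$, and the Bregman formula above collapses to the entropy-production functional $\cE(H_s f, \log H_s f)$ at $s=T-t$, so the displayed inequality becomes the classical entropy-production decay
\[
\frac{d}{ds}\,\cE(H_s f, \log H_s f) \leq -\frac{1}{C\alpha}\,\cE(H_s f, \log H_s f),
\]
which drives MLSI in the diffusion setting and recovers Conjecture~\ref{conj:pt} in the limit, consistent with the stated equivalence. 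The point, however, is to establish the inequality at finite $T$, not merely its limit.

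The main work, and the decisive obstacle, is the third step: proving the differential inequality from the curvature hypothesis. Differentiating $I_t = \E[G(t,X_t)]$ once more (with $G(t,x)=\sum_y p(x,y)\psi(u(t,y)/u(t,x))$), and again substituting $\partial_t u = (I-P)u$ and the forward equation for the law of $X_t$, produces a discrete analogue of the iterated carr\'e-du-champ operator $\Gamma_2$ applied to $\log u$, together with cross terms from the joint time-dependence of $u$ and of the drift. Curvature would enter through the continuous-time gradient contraction $\Vert H_s g\Vert_{\mathrm{Lip}} \leq e^{-s/\alpha}\Vert g\Vert_{\mathrm{Lip}}$, which follows from the coarse Ricci bound exactly as in the duality proof; the hope is to bound the $\Gamma_2$-term below by $\tfrac{1}{\alpha}$ times the $\Gamma$-term, i.e.\ to extract a $CD(1/\alpha,\infty)$-type inequality along the interpolation. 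The hard part will be that this bridge does not exist for free: Ollivier curvature is a first-order, $W_1$-contraction condition, whereas $I_t'$ is a genuinely second-order quantity, and in discrete space there is no chain rule relating $\Gamma(\log u)$ to $\Gamma(u)$, so the Lipschitz contraction does not transfer to the averaged, logarithmically-tilted energy appearing in $I_t$. Moreover the complete-graph example recalled after Conjecture~\ref{conj:pt} shows that no argument yielding a bound on the full log-Sobolev constant $\rho$ can succeed, so any viable proof must genuinely exploit the modified quantity $\cE(\cdot,\log\cdot)$, equivalently the precise Bregman structure of $I_t$, rather than a cruder $L^2$ surrogate. Identifying the correct discrete $\Gamma_2$-substitute that coarse Ricci curvature does control, or alternatively bounding the growth of the averaged $I_t$ directly by a refinement of the coupling in Proposition~\ref{prop:counpling}, is exactly where a new idea is required, and is the reason the conjecture remains open.
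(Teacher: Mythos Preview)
The statement in question is a \emph{conjecture}, not a theorem: the paper does not prove it and does not claim to. What the paper establishes immediately after stating Conjecture~\ref{conj:mlsi} is only its \emph{equivalence} with the Peres--Tetali Conjecture~\ref{conj:pt}, by computing the $T\to\infty$ limits $\relent{X_T}{B_T}\to\Ent_\pi(f)$ and $I_T\to\cE(f,\log f)$. Your sanity-check paragraph reproduces exactly this limit argument, so on the one point the paper actually argues, you and the paper agree.

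Everything else in your proposal goes beyond what the paper attempts. You reduce \eqref{eq:conj} to the pointwise differential inequality $I_t'\geq I_t/(C\alpha)$ and then sketch a discrete Bakry--\'Emery/$\Gamma_2$ attack. The reduction is sound as a sufficient condition (though a priori strictly stronger than the conjecture, which only asks for the integrated inequality at the terminal time), and your Bregman representation of $I_t$ is the expected entropy-production formula. But you correctly locate the genuine gap yourself: Ollivier curvature is a first-order $W_1$-contraction, while $I_t'$ is a second-order quantity, and in discrete space the missing chain rule blocks the passage from Lipschitz contraction of $H_s$ to control of the logarithmically tilted $\Gamma_2$-term. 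Your closing sentence, that ``a new idea is required, and is the reason the conjecture remains open,'' is exactly right and is consistent with the paper's own stance. So what you have written is not a proof but an informed analysis of the obstruction; the paper does not attempt the $\Gamma_2$ route at all, so your discussion is complementary rather than a different proof of the same result.
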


As $T \to \infty$, we have $P_T f (x_0) \to \E_\pi f = 1$ and thus 
\[
\relent{X_T}{B_T} \to  \Ent_\pi ( f )
\]
Moreover, we claim that $I_T \to \cE(f, \log f)$ as $T \to \infty$.
Together, these show that Conjectures \ref{conj:pt} and \ref{conj:mlsi} are equivalent.

To verify the latter claim, note that
from \eqref{eq:relent3} and~\eqref{eq:change2} we have
\[
\begin{split}
\relent{X_{[0,t]}}{B_{[0,t]}} 
& = \E\left[ \log \left( \frac{ H_{T-t} f (X_t) }{ H_T f ( x_0 ) } \right) \right] \\
& = \E\left[ \log \left( \frac{ H_{T-t} f (B_t) }{ H_T f ( x_0 ) } \right) 
\frac{ H_{T-t} f(B_t) }{ H_T f ( x_0 ) } \right] \\
& = 
\frac 1 { H_T f(x_0)} \, H_t \left( H_{T-t} f \log H_{T-t} f \right) ( x_0 ) 
- \log H_T f ( x_0 ).
\end{split}
\]
Differentiating at $t=T$ yields
\[
I_T = \frac 1 { H_T f (x_0) }  \, H_T \left( \Delta ( f \log f ) 
- (\Delta f) (\log f + 1) \right)(x_0) .
\]
where $\Delta = I-p$ denotes the generator of the semigroup $(H_t)$. 
Recall that $\delta_{x_0} H_T$ converges weakly to $\pi$, and that by stationarity 
$\E_\pi \Delta g = 0$ for every function $g$. Thus 
\[
\lim_{T \to \infty} I_T = - \E_\pi [ (\Delta f) \log f ] . 
\]
The latter equals $\cE(f,\log f)$ by reversibility, hence the claim. 

\bibliographystyle{alpha}
\bibliography{transport}

\end{document}